\newcommand{\barint}{
         \rule[.036in]{.12in}{.009in}\kern-.16in
          \displaystyle\int  }
\newcommand{\R}{{\mathbb{R}}}
\newcommand{\rn}{{\mathbb{R}^{n}}}
\newcommand{\Om}{{\Omega}}
\newcommand{\al}{{\alpha}}
\newcommand{\be}{{\beta}}
\newcommand{\s}{{\sigma}}
\newcommand{\is}{{\langle\nabla p(x), x\rangle}}
\newtheorem{theo}{\bf Theorem}[section]
\newtheorem{coro}{\bf Corollary}[section]
\newtheorem{rem}{\bf Remark}[section]
\newtheorem{defi}{\bf Definition}[section]
\newtheorem{fact}{\bf Fact}[section]
\def\essinf{\mathop{\textrm{ess\,inf}}}
\def\esssup{\mathop{\textrm{ess\,sup}}}
\title{Variable exponent Hardy--type inequalities\\ in $\rn$}
\newcommand{\comment}[1]{} 
\author[$\dagger$]{Sylwia Dudek}
\author[$\diamond$]{Iwona Skrzypczak~\thanks{The author was supported by NCN grant 2011/03/N/ST1/00111.}}
\affil[$\dagger$]{\small
Institute of Mathematics,
Krakow University of Technology, \newline
ul. Warszawska 24, 31--155 Krakow, Poland \newline e--mail: sbarnas@pk.edu.pl}
\affil[$\diamond$]{\small
Faculty of Mathematics, Informatics and Mechanics,
University of Warsaw, \newline
ul. Banacha 2, 02--097 Warsaw, Poland \newline e--mail: iskrzypczak@mimuw.edu.pl}
\date{}
\begin{document}
\maketitle
\thispagestyle{empty}

\begin{abstract}
 In this paper, we develop the results obtained recently by the authors in~\cite{barskrzy1}.  We investigate the weighted $p(x)$--Hardy inequality with the additional term of the form
\[
\int_\Om \ |\xi|^{p(x)}\mu_{1,\be} (dx) \leqslant \int_\Om |\nabla \xi|^{p(x)}\mu_{2,\be} (dx)+\int_\Om \left|\xi{\log \xi} \right|^{p(x)} \mu_{3,\be} (dx),
\]
holding for Lipschitz functions compactly supported in $\Omega\subseteq\rn$. We focus on the $n$--dimensional case giving some examples. Moreover, we compare our inequalities with the existing in the literature.
\end{abstract}

\noindent {\bf Keywords:} \  $p(x)$--Laplacian, Caccioppoli inequality, Hardy inequality, variable exponent Lebesgue space.

\noindent {\bf 2010 Mathematics Subject Classification:} 26D10, 35J60, 35J91.
\newpage

\section{Introduction}\label{intro}

By $p(x)$--harmonic problems we understand those which involve $p(x)$--Laplace operator $\Delta_{p(x)} u=\mathrm{div}(|\nabla u|^{p(x)-2}\nabla u)$. 
Let $\Omega$ be a given open subset of $\mathbb{R}^n$, not necessarily bounded. 
We assume that the function $p$ is such that $p \in W_{loc}^{1,1}(\Omega)$, ${p}^{p(x)},|\nabla p|^{p(x)}\in L^{1}_{loc}(\Omega)$ and satisfies $1<\textrm{ess}\inf_{x \in \Omega} p(x) \leqslant p(x) \leqslant \textrm{ess}\sup_{x \in \Omega} p(x)<\infty$.

In \cite{barskrzy1}  the authors proved the inequality holding for every Lipschitz function $\xi:\Omega \to \mathbb{R}$ with compact support in $\Om$ having the following form
\begin{equation}\label{IntroHar}
\int_\Om \ |\xi|^{p(x)} \mu_{1,\be}(dx)\leqslant\int_\Om \left(|\nabla \xi|^{p(x)}+\left|\xi {\log \xi } \right|^{p(x)}\cdot \frac{\left|\nabla p(x)\right|^{p(x)}}{{p(x)}^{p(x)}}\right) \mu_{2,\be}(dx) ,
\end{equation}
 where the measures $\mu_{1,\be}(dx),\mu_{2,\be}(dx)$ depend on the nonnegative solution to nonlinear problem $-\Delta_{p(x)}u\geqslant \Phi$ with a locally integrable function $\Phi$ (we recall the result as Theorem~\ref{theoplapx} here), and on a certain parameter $\be>0$. 

We investigate futher \eqref{IntroHar} paying special attention to  $n$--dimensional domains $\Omega$ ($n\geqslant 1$). In the general approach we do not require any kind of symmetry of $u$, $p$, or $\Omega$. We present the example of Hardy--type inequality in a general non--radial  case. However, when we assume that $u$ is radially symmetrical, its $p(x)$--Laplacian has simpler form (see Fact~\ref{lemradx}) and the measures are much easier to compute (see Theorem~\ref{theoradx}). We give certain examples. We stress that we do not expect $p$ to be radial. 


We deal with the variable exponent Lebesgue spaces, which recently have received more and more attention both --- from the theoretical and from the applied point of view. We refer to books~\cite{ks,KoRa} for the detailed information on the theoretical approach to the Lebesgue and the Sobolev spaces with variable exponents. Various attempts to prove existence, uniqueness, or regularity theory for problems stated in variable exponent spaces are to found e.g.~in~\cite{barnas, fanzhang}. We refer for the survey \cite{overview}  summarising inter alia results on qualitative properties of solutions to the related PDEs.

 The typical examples of equations stated in variable exponent spaces are models of electrorheological fluids, see e.g.~\cite{raj-ru1,el-rh2}. 
Electrorheological fluids change their mechanical properties dramatically when an external electric field is applied, so variable exponent Lebesgue spaces are natural settings for their modelling. 
Some classical models are also generalised in variable exponent Lebesgue spaces. In \cite{ks} we find investigations on Poisson equation, as well as Stokes problem being of fundamental importance in describing fluid dynamics. 


Hardy--type inequalities are important tools in various fields of analysis.  Let us mention such branches as functional analysis, harmonic analysis, probability theory, and PDEs.  Weighted versions of Hardy--type inequalities are also investigated on their own in the classical way~\cite{kmp,muckenhoupt,plap}, as well as in the various generalised frameworks~\cite{bogdan,bhs,akkpp2012,orliczhardy}.

Recently,  Hardy--type inequalities in variable exponent Lebesgue spaces have become a lively studied topic of analysis~\cite{DiSa,HaHaKo,MCO,MCMO,MaHa,Samko1}. However, they are usually considered in one dimension, and there are only a few $n$--dimensional results. The paper~\cite{Samko1} is devoted to the inequality with the weights depending on distance from a single point, while in~\cite{HaHaKo} the weights depend on distance from a boundary in $\R^n$. 

We point out that in the majority of the above mentioned papers the authors deal with the norm version of Hardy inequalities. We obtain the modular one. We would like to stress that only in the constant exponent case the both types are equivalent. In the variable exponent case it is not direct to transform one of these types to another. To the authors' best knowledge the only result of this kind is given by Fan--Zhao~\cite[Theorem~1.3]{orlicz} where the authors derive a tool giving certain form of the norm version of Hardy inequality from a modular one.

The tool we investigate in this paper is the general $p(x)$--Hardy inequality, which was introduced by  Barna\'s--Skrzypczak~\cite{barskrzy1}. Its proof is based  on  the methods from~\cite{nonex,pohmi_99} developed in~\cite{plap,bcp-plap,orliczhardy}. The special cases of the general $p(x)$--Hardy inequality~\eqref{IntroHar} are classical Hardy and Hardy--Poincar\'{e} inequalities with optimal constants. 

The paper is organised as follows. In Section~\ref{prelim} we introduce tools.  In Section~\ref{ex} we  give examples. Section~\ref{links} is devoted to links with literature. 

\section{Preliminaries}\label{prelim}
\subsubsection*{Notation}

In the sequel we assume that $\Omega\subseteq\mathbb{R}^n$ is an open subset not necessarily bounded. If $f$ is defined on the set $A$ by $f\chi_{A}$ we understand function $f$ extended by $0$ outside $A$.
By $\langle\cdot,\cdot\rangle$ we understand the classical scalar product in~$\R^n$. 


\subsubsection*{General Lebesgue and Sobolev spaces}

In the sequel we suppose that measurable function $p:\Omega \rightarrow (1, \infty)$ is such that
	\begin{equation}\label{P}
  1<p^-:=\essinf_{x \in \Omega} p(x) \leqslant p(x) \leqslant p^+:=\esssup_{x \in \Omega} p(x)<\infty.
	\end{equation}

We recall some properties of the variable exponent
 spaces $L^{p(x)}(\Omega)$ and $W^{1,p(x)}(\Omega)$. By
$E(\Omega)$ we denote the set of all equivalence classes of measurable real functions defined on $\Omega$ being equal almost everywhere. The variable exponent Lebesgue space is defined as
\[
    L^{p(x)}(\Omega)=\{u \in E(\Omega): \int_{\Omega} |u(x)|^{p(x)} dx<\infty \}
\]
  equipped with the norm
$
    \|u\|_{L^{p(x)}(\Omega)}:=\inf \Big\{\lambda>0: \int_{\Omega} \big|\frac{u(x)}{\lambda}\big|^{p(x)}dx \leqslant 1 \Big\}.
$\\
We define the variable exponent Sobolev space $W^{1,p(x)}(\Omega)$ by
\[
    W^{1,p(x)}(\Omega) = \{u \in L^{p(x)}(\Omega): \nabla u \in L^{p(x)}(\Omega; \mathbb{R}^n)\}
\]
  equipped with the norm $\|u\|_{W^{1,p(x)}(\Omega)} = \|u\|_{L^{p(x)}(\Omega)}+\|\nabla u\|_{L^{p(x)}(\Omega)}.$

  Then $(L^{p(x)}(\Omega),\|\cdot\|_{L^{p(x)}(\Omega)})$ and $(W^{1,p(x)}(\Omega), \|\cdot\|_{W^{1,p(x)}(\Omega)})$ are
  separable and reflexive Banach spaces.
  
  For more detailed information we refer to the monographs \cite{ks,fan,orlicz}.

\medskip

 By $\mathcal{P}(\Omega)$	we denote the class of the functions $p$ such that~\eqref{P} is satisfied and $p \in W_{loc}^{1,1}(\Omega)$, ${p}^{p(x)},|\nabla p|^{p(x)}\in L^{1}_{loc}(\Omega)$.

\subsubsection*{Differential inequality}

Our analysis is based on the following differential inequality.

\begin{defi}\label{defnier}
Let $\Om$ be any open subset of $\rn$. We assume that the measurable function $p:\Omega \rightarrow (1, \infty)$ satisfies~\eqref{P}
and $\Phi$ is the locally integrable  function defined in $\Om$ such that  for every nonnegative compactly supported $w\in
W^{1,p(x)}(\Om)$, we have
$
 \int_\Om \Phi w\,dx >-\infty.
$\\
  Let $u\in W^{1,p(x)}_{loc}(\Om)$ and $u\not\equiv 0$. We say that
\[
-\Delta_{p(x)} u\geqslant \Phi,
\]
if for every nonnegative compactly supported $w\in
W^{1,p(x)}(\Om)$, we have
\begin{equation}\label{nikfo}
\langle -\Delta_{p(x)} u,w \rangle := \int_\Om |\nabla u|^{p(x)-2}\langle\nabla
u,\nabla w\rangle\, dx \geqslant \int_\Om \Phi w\, dx.
\end{equation}
\end{defi}
\begin{rem}\label{mal1}\rm
Note that $p(x)$--Laplacian is a continuous, bounded, and strictly monotone operator defined for every compactly supported function $w \in W^{1,p(x)}(\Om)$ (see e.g. \cite[Theorem 3.1]{fanzhang} for the definitions and the proofs).
In particular, it is well--defined in the distributional sense.
\end{rem}

When we consider radial solutions to $-\Delta_{p(x)}u\geqslant \Phi$, we need the following useful fact, whose proof is given in Appendix.

\begin{fact}[$p(x)$--Laplacian of a radial function]\label{lemradx} Let $\Omega\subseteq \mathbb{R}^n$ be an open set and $u(x)=v(|x|)\in W^{1,p(x)}_{loc}(\Omega)\cap W^{2,1}_{loc}(\Omega)$, then
\begin{eqnarray*}
&\Delta_{p(x)} u(x)=&\\
&=|v'(|x|)|^{p(x)-2}\Big[ \langle\nabla p(x), x\rangle v'(|x|) \frac{\log |v'(|x|)| }{|x| }+{v''(|x|)}(p(x)-1)+\frac{(n-1)v'(|x|)}{|x|}\Big].&\end{eqnarray*}
\end{fact}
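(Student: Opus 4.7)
The plan is to compute $\Delta_{p(x)} u = \dv(|\na u|^{p(x)-2}\na u)$ directly by writing it as $\na(|\na u|^{p(x)-2})\cdot\na u + |\na u|^{p(x)-2}\D u$ and evaluating each factor using the radial structure of $u$. Since $u(x)=v(|x|)$ lies in $W^{2,1}_{loc}(\Om)$, the product rule can be applied a.e., so the identity is to be understood pointwise on the set where $v'(|x|)\neq 0$ (with the convention $0\log 0=0$ elsewhere).

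First I would record the elementary identities for a radial function: with $r=|x|$,
\[
\na u(x)=v'(r)\,\frac{x}{r},\qquad |\na u(x)|=|v'(r)|,\qquad \D u(x)=v''(r)+\frac{n-1}{r}v'(r).
\]
The last of these is the classical expression for the Laplacian of a radial function and will take care of the $v''(p-1)$ and $(n-1)v'/r$ contributions once combined with what comes out of the $p(x)$-dependent weight.

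Next I would handle the weight $G(x):=|\na u|^{p(x)-2}=|v'(r)|^{p(x)-2}$ by taking logarithms, writing $\log G=(p(x)-2)\log|v'(r)|$ and differentiating to obtain
\[
\frac{\na G}{G}=\na p(x)\,\log|v'(r)|+(p(x)-2)\,\frac{\na|v'(r)|}{|v'(r)|}.
\]
Using $\na|v'(r)|=\mathrm{sgn}(v'(r))\,v''(r)\,\frac{x}{r}$, a short calculation gives
\[
\frac{\na|v'(r)|}{|v'(r)|}\cdot\na u = v''(r),\qquad \na p(x)\cdot\na u = \frac{v'(r)}{r}\,\is.
\]
Substituting into
\[
\D_{p(x)}u=\na G\cdot\na u+G\,\D u
=G\!\left(\na p(x)\log|v'(r)|\cdot\na u+(p(x)-2)\frac{\na|v'(r)|}{|v'(r)|}\cdot\na u+\D u\right)
\]
and collecting the $v''(r)$ terms (namely $(p(x)-2)v''(r)+v''(r)=(p(x)-1)v''(r)$) yields exactly the stated formula.

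The only delicate point is justifying the product/chain rule at points where $v'(r)=0$, where $|\na u|^{p(x)-2}$ may blow up or be undefined when $p(x)<2$; however, on such a null set the right-hand side is interpreted via $0\cdot\log 0 = 0$ and the identity holds a.e., which is all that is required for the distributional computations of \eqref{nikfo}. Apart from this, the argument is a routine application of the chain rule together with logarithmic differentiation to produce the $\langle\nabla p(x),x\rangle$ term, so no genuine obstacle is anticipated.
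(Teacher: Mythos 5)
Your proposal is correct and follows essentially the same route as the paper's Appendix proof: a direct computation of $\dv(|\na u|^{p(x)-2}\na u)$ via the product and chain rules, with the $\langle\na p(x),x\rangle\log|v'(|x|)|$ term arising from differentiating the weight $|v'(|x|)|^{p(x)-2}$ in its exponent. The paper organizes the calculation component-wise and sums over $j$, whereas you use the vector identity $\dv(G\,\na u)=\na G\cdot\na u+G\,\D u$ together with the classical radial Laplacian, but this is only a difference in bookkeeping; your added caveat about the set $\{v'=0\}$ is a reasonable refinement the paper leaves implicit.
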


\subsubsection*{Crucial conditions} 

We suppose that the measurable function $p:\Omega \rightarrow (1,\infty)$ satisfies~\eqref{P}, nonnegative $u
\in W^{1,p(x)}_{loc}(\Om)$ and $\Phi\in L^{1}_{loc}(\Om)$ satisfy PDI $-\Delta_{p(x)} u\geqslant \Phi$, in the sense of Definition~\ref{defnier}. 
 We assume that there exist a continuous function $\s(x):\overline{\Omega}\to\R$ and a parameter $\beta>0$, such that the following conditions are satisfied
\begin{eqnarray}
&\label{sx}{\Phi\cdot u}+{\s(x) |\nabla u|^{p(x)}}\geqslant 0 \quad{\rm a.e.\  in}\quad \Om,&\\
&\be>\sup\limits_{x \in \overline{\Omega}} \s(x).&\label{sbeta}
\end{eqnarray}


\subsubsection*{General variable exponent inequality}\label{mainpx}

We investigate further the following  main result of Barna\'s--Skrzypczak \cite{barskrzy1}.

\begin{theo} [\cite{barskrzy1}, Theorem 4.1]
\label{theoplapx} 
Let $\Omega\subseteq\mathbb{R}^n$ be an open subset not necessarily bounded and $p\in{\cal{P}}(\Omega)$.
Let nonnegative $u
\in W^{1,p(x)}_{loc}(\Om)$ and $\Phi\in L^{1}_{loc}(\Om)$ satisfy PDI $-\Delta_{p(x)} u\geqslant \Phi$, in the sense of Definition~\ref{defnier}.
Assume further that functions $u$, $\Phi$, $p(x)$, $\s(x)$ and a parameter $\beta>0$ satisfy Crucial conditions~\eqref{sx}
and~\eqref{sbeta}. 

Then for every Lipschitz function $\xi$ with compact support in $\Om$ we have
\begin{equation}\label{hardypx1}
\int_\Om \ |\xi|^{p(x)} \mu_{1,\beta}(dx)\leqslant \int_\Om |\nabla \xi|^{p(x)}\mu_{2,\beta}(dx)+\int_\Om \left|\xi
 {\log \xi } \right|^{p(x)}\cdot \frac{\left|\nabla p(x)\right|^{p(x)}}{{p(x)}^{p(x)}} \mu_{2,\beta}(dx),
\end{equation}
where
\begin{eqnarray}\label{mu1px}
&\mu_{1,\beta}(dx)&=\big(\Phi\cdot u+\sigma(x) |\nabla u|^{p(x)}\big)\cdot u^{-\be-1}\chi_{\{u>0\}}\ dx,
\\\label{mu2px}
&\mu_{2,\beta}(dx)&=   { \Big(\frac{p(x)-1}{\be-\s(x)}\Big)^{p(x)-1} } 2^{(p(x)-1)\chi_{\left\{|\nabla p|\neq 0\right\}}}u^{p(x)-\be-1}\chi_{\{|\nabla u|\neq 0\}}\ dx.\end{eqnarray}
\end{theo}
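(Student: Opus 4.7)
The natural choice is to test the differential inequality of Definition~\ref{defnier} with $w=|\xi|^{p(x)}u^{-\be}$, because the $u^{-\be-1}\,\nabla u$ factor produced by $\nabla w$ is precisely what couples with Crucial condition~\eqref{sx} to generate the measure $\mu_{1,\be}$. A direct substitution is not admissible: $u^{-\be}$ is singular on $\{u=0\}$ and, since $p$ depends on $x$, the formal identity
$$\nabla|\xi|^{p(x)}=p(x)|\xi|^{p(x)-2}\xi\,\nabla\xi+|\xi|^{p(x)}\log|\xi|\,\nabla p(x)$$
produces a $\log|\xi|$--singularity on $\{\xi=0\}$. I would therefore work first with a regularised test function built from $u+\ve$ and a smoothing of $|\xi|$ (for example $(|\xi|^2+\de^2)^{p(x)/2}$), cut off to remain compactly supported in $\mathrm{supp}\,\xi$, and send $\ve,\de\to 0^+$ at the end. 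The restrictions $\chi_{\{u>0\}}$ in~\eqref{mu1px} and $\chi_{\{|\na u|\ne 0\}}$ in~\eqref{mu2px} then arise naturally from this limiting procedure.

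Substituting $\nabla w$ into \eqref{nikfo}, bounding $\langle\nabla u,\nabla\xi\rangle\leqslant|\nabla u||\nabla\xi|$ and $\langle\nabla u,\nabla p\rangle\leqslant|\nabla u||\nabla p|$, and moving the $-\be$ piece to the left, the inequality assumes the shape
$$\be\int_\Om|\xi|^{p(x)}|\na u|^{p(x)}u^{-\be-1}\,dx+\int_\Om\Phi\,u\cdot|\xi|^{p(x)}u^{-\be-1}\,dx\;\leqslant\;\mathrm{I}+\mathrm{II},$$
where $\mathrm{I}$ and $\mathrm{II}$ collect, respectively, the $\nabla\xi$ and $\nabla p$ contributions coming from $\nabla w$. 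Splitting $\be=(\be-\s(x))+\s(x)$ in the first term on the left and invoking~\eqref{sx}, the $\s$--piece combined with the $\Phi u$--piece is precisely $\int_\Om|\xi|^{p(x)}\mu_{1,\be}(dx)$, while~\eqref{sbeta} keeps the residual $\int_\Om(\be-\s(x))|\xi|^{p(x)}|\na u|^{p(x)}u^{-\be-1}\,dx$ on the left nonnegative and hence available for absorption.

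The remaining task is to bound $\mathrm{I}$ and $\mathrm{II}$ by Young's inequality with conjugate exponents $p(x)/(p(x)-1)$ and $p(x)$. Each integrand is decomposed as $a_ib_i$ with $a_i^{p(x)/(p(x)-1)}=|\xi|^{p(x)}|\na u|^{p(x)}u^{-\be-1}$ and $b_i^{p(x)}$ equal to $|\na\xi|^{p(x)}u^{p(x)-\be-1}$ in $\mathrm{I}$ and to $|\xi\log\xi|^{p(x)}|\na p(x)|^{p(x)}p(x)^{-p(x)}u^{p(x)-\be-1}$ in $\mathrm{II}$. The free weights are chosen so that the sum of the two $a_i$--coefficients equals exactly $\be-\s(x)$, cancelling the residual on the left: the full budget is assigned to $\mathrm{I}$ when $\nabla p=0$, otherwise it is split equally between $\mathrm{I}$ and $\mathrm{II}$. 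Routine algebra then produces the constant $\bigl(\frac{p(x)-1}{\be-\s(x)}\bigr)^{p(x)-1}$ and accounts for the factor $2^{(p(x)-1)\chi_{\{|\na p|\ne 0\}}}$ recorded in~\eqref{mu2px}.

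The principal obstacle is the regularisation step: one must check that the approximating test functions belong to $W^{1,p(x)}(\Om)$ and are compactly supported in $\Om$ (using $p\in\mathcal{P}(\Om)$, which controls $p^{p(x)}$ and $|\na p|^{p(x)}$ locally, together with the Lipschitz assumption on $\xi$), and then justify passage to the limit via Fatou on the left and monotone/dominated convergence on the right. Aside from these technicalities the argument is a carefully tuned algebraic rearrangement in the spirit of~\cite{bcp-plap,plap,orliczhardy}.
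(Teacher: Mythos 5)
Your proposal is correct and follows essentially the same route as the original argument: this paper does not reprove Theorem~\ref{theoplapx} but imports it from \cite{barskrzy1}, whose proof (in the tradition of \cite{plap,orliczhardy,nonex}) likewise tests the PDI with a regularisation of $|\xi|^{p(x)}u^{-\be}$, splits $\be=(\be-\s(x))+\s(x)$ so that Crucial condition~\eqref{sx} yields $\mu_{1,\be}$, and absorbs the $\nabla\xi$ and $\nabla p$ contributions by weighted Young's inequality with the $\be-\s(x)$ budget split equally when $\nabla p\neq 0$. I verified that your choice of Young weights reproduces exactly the constants $\bigl(\frac{p(x)-1}{\be-\s(x)}\bigr)^{p(x)-1}2^{(p(x)-1)\chi_{\{|\nabla p|\neq 0\}}}$ in \eqref{mu2px} and the factor $p(x)^{-p(x)}$ attached to the logarithmic term.
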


\begin{rem}\rm

We note that \eqref{hardypx1} is of Hardy type with respect to $\xi$ and of Caccioppoli type with respect to $u$. Indeed, in the terms of $\xi$ --- we find $|\xi|^{p(x)}$ on the left--hand side of \eqref{hardypx1} and $|\nabla \xi|^{p(x)}$ on the right--hand side. Caccioppoli inequality should involve $|\nabla u|^{p(x)}$  on the left--hand side and $u^{p(x)}$ on the right--hand side. To be precise and avoid dependence on the gradient on the right--hand side of~\eqref{IntroHar}, one should estimate the characteristic function ($\chi_{\{|\nabla u|\neq 0\}}\leqslant 1$).

The paper~\cite{barskrzyCac} is devoted to analysis of~\eqref{hardypx1} as the Caccioppoli inequality,
 and Liouville--type result for solutions to  $-\Delta_{p(x)} u\geqslant \Phi$ are given therein. 
\end{rem}

\begin{rem}\rm \label{remCacHar}
When we consider $1<p(x)\equiv p<\infty$, we retrieve the main result of  \cite{plap}, implying the classical Hardy inequality with optimal constant (see \cite{plap} for the details and some other examples). Moreover, it gives optimal constants for Hardy--Poincar\'{e} inequalities with weights of a type $\left(1+|x|^\frac{p}{p-1}\right)^\al$ involving the sufficiently big parameter $\al>0$ (see \cite{bcp-plap} for the details). 
\end{rem}


\subsubsection*{Quasi--radial inequality}

When we assume that the nonnegative function $u \in W^{1,p(x)}_{loc}(\Om)$, which is supposed to satisfy $-\Delta_{p(x)}u\geqslant\Phi$, is a radial function, we may simplify the statement of Theorem~\ref{theoplapx}. We point out that we do not expect $p(x)$ to be radial.  For this reason  we call this case quasi--radial.
We remark that we start Subsection~\ref{radial} with the example of radial $u$ satisfying the PDI $-\Delta_{p(x)}u\geqslant\Phi$ with non--radial $p$. The proof of the following theorem is given in Appendix.

\begin{theo}[Inequality with quasi--radial measures]
\label{theoradx}
Assume that $\Omega \subseteq \mathbb{R}^n$ is an open subset, $p \in \mathcal{P}(\Omega)$, and $u(x)=v(|x|)\in W^{1,p(x)}_{loc}(\Omega)\cap W^{2,1}_{loc}(\Omega)$ is a nonnegative function. 
Assume further that a continuous  function $\s(x)$ and a parameter $\beta>0$ are such that  $\sup_{x \in \overline{\Omega}}\s(x)<\beta$ and the following condition is satisfied
\[
K(x):=\s(x)- \frac{v(|x|)}{v'(|x|)}\Big[ \is \frac{\log|v'(|x|)| }{|x| }+\frac{v''(|x|)}{v'(|x|)}(p(x)-1)+\frac{n-1}{|x|}\Big] \geqslant 0.
\]
Then, for every Lipschitz function $\xi$ with compact support in $\Omega$, we have
\begin{equation*}
\int_\Om \ |\xi|^{p(x)} \mu_{1,\be}(dx)\leqslant \int_\Om |\nabla \xi|^{p(x)}\mu_{2,\be}(dx)+\int_\Om \left|\xi {\log \xi } \right|^{p(x)}\cdot \frac{\left|\nabla p(x)\right|^{p(x)}}{{p(x)}^{p(x)}} \mu_{2,\be}(dx),
\end{equation*}
where
\begin{eqnarray*}\label{mu1radx}
&\mu_{1,\be}(dx)&= |v'(|x|)|^{p(x)}(v(|x|))^{-\beta-1}\chi_{\{v>0\}}
K(x) dx,\\
\label{mu2radx}&\mu_{2,\be}(dx)&=   { \Big(\frac{p(x)-1}{\beta-\s(x)}\Big)^{p(x)-1} } 2^{(p(x)-1)\chi_{\left\{|\nabla p|\neq 0\right\}}}(v(|x|))^{p(x)-\beta-1}\chi_{\{|v'|\neq 0\}} dx.\end{eqnarray*}
\end{theo}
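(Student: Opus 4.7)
The strategy is to reduce Theorem~\ref{theoradx} to a direct application of Theorem~\ref{theoplapx}, using the radial formula supplied by Fact~\ref{lemradx} as the main computational input. Concretely, I would set $\Phi := -\Delta_{p(x)} u$. Since $u\in W^{1,p(x)}_{loc}(\Omega)\cap W^{2,1}_{loc}(\Omega)$, Fact~\ref{lemradx} gives a pointwise-a.e.\ formula for $\Delta_{p(x)} u$, and under the paper's standing assumptions on $p$ this expression is locally integrable; integration by parts against a compactly supported nonnegative test function $w\in W^{1,p(x)}(\Omega)$ shows that the classical identity $-\Delta_{p(x)} u=\Phi$ coincides with the weak formulation in Definition~\ref{defnier}, so in particular the PDI $-\Delta_{p(x)} u\geqslant \Phi$ holds.

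Next I would verify the crucial conditions \eqref{sx} and \eqref{sbeta}. Condition \eqref{sbeta} is given by hypothesis. For \eqref{sx}, the key computation is to check that
\[
\Phi\cdot u + \s(x)|\nabla u|^{p(x)} = |v'(|x|)|^{p(x)}\, K(x).
\]
For a radial function one has $|\nabla u|=|v'(|x|)|$, hence $|\nabla u|^{p(x)}=|v'(|x|)|^{p(x)}$. Plugging in the formula of Fact~\ref{lemradx} for $\Delta_{p(x)} u$, multiplying by $-v(|x|)$, and factoring out $|v'(|x|)|^{p(x)}$ after writing $|v'|^{p(x)-2}v(|x|) \cdot v'=|v'|^{p(x)} \cdot (v(|x|)/v'(|x|))$ in each term that is linear in $v'$ and $|v'|^{p(x)-2}v(|x|) \cdot v''= |v'|^{p(x)}\cdot v(|x|) v''/v'(|x|)^{2}$, I recover exactly the bracketed expression that defines $K(x)$. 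The hypothesis $K(x)\geqslant 0$ then yields \eqref{sx}.

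With both crucial conditions confirmed, Theorem~\ref{theoplapx} applies and produces inequality \eqref{hardypx1} with the measures $\mu_{1,\be}$ and $\mu_{2,\be}$ defined by \eqref{mu1px}, \eqref{mu2px}. Substituting $u(x)=v(|x|)$, $|\nabla u|=|v'(|x|)|$, and the identity $\Phi\cdot u+\s(x)|\nabla u|^{p(x)}=|v'(|x|)|^{p(x)}K(x)$ into \eqref{mu1px} gives the announced form of $\mu_{1,\be}$; the radial form of $\mu_{2,\be}$ follows by just rewriting $u^{p(x)-\be-1}$ as $(v(|x|))^{p(x)-\be-1}$ and $\chi_{\{|\nabla u|\ne 0\}}$ as $\chi_{\{|v'|\ne 0\}}$.

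The only genuine obstacle is the algebraic reduction in the second step: one must carefully track the factor $|v'|^{p(x)-2}v'$ coming from the $p(x)$-Laplacian of a radial function and check that after multiplication by $-v(|x|)$ each of the three terms in the bracket of Fact~\ref{lemradx} contributes the corresponding summand inside $K(x)$, with a clean common factor of $|v'(|x|)|^{p(x)}$. This is bookkeeping rather than a conceptual difficulty, but it is the place where sign conventions and the interplay between $|v'|^{p(x)-2}v'$ and the $\log|v'|$ term must be handled with care.
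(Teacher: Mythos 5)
Your proposal is correct and follows essentially the same route as the paper's proof: one takes $\Phi=-\Delta_{p(x)}u$, uses Fact~\ref{lemradx} to compute $\Phi\cdot u+\s(x)|\nabla u|^{p(x)}=|v'(|x|)|^{p(x)}\,K(x)\geqslant 0$, and then applies Theorem~\ref{theoplapx} with the substitutions $u=v(|x|)$, $|\nabla u|=|v'(|x|)|$. The algebraic bookkeeping you flag (rewriting $|v'|^{p(x)-2}v'\,v$ as $|v'|^{p(x)}\cdot v/v'$) is exactly the step the paper carries out.
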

\begin{rem}\rm The above inequality should be called Caccioppoli inequality for radial solution $u(x)=v(|x|)$ to  $-\Delta_{p(x)}u\geqslant\Phi$ and weighted Hardy inequality for Lipschitz and compactly supported functions $\xi$ (with not necessarily radial weights). For discussion see Remark~\ref{remCacHar}. \end{rem}


\section{Examples of variable exponent Hardy inequality}\label{ex}

This section is devoted to original examples of \eqref{hardypx1}.
The first part deals  with  several applications of Theorem~\ref{theoradx}. Let us recall that we say it is quasi--radial case, because we expect here radial $u$ (solving $-\Delta_{p(x)}u\geqslant \Phi$), but not necessarily $p$.  The second part of this section contains a non--radial example. We give it here to stress, that our method is general and we do not need simplifications such us one--dimensionality or radiality.

\subsection{Quasi--radial case}\label{radial}

Before we present a sample inequalities resulting  
from Theorem~\ref{theoradx} we point out that quasi--radial case is  
not empty. Indeed, it may happen that PDI  
$-\Delta_{p(x)}u\geqslant\Phi$ with non--radial $p \in \mathcal{P}(\Omega)$ is satisfied by nonnegative
radial function $u \in W^{1,p(x)}_{loc}(\Omega)$.
Let us give a one--dimensional example.
\begin{rem}
  On the bounded interval $I \subseteq (-M,M)\subseteq\R$, with some $M>0$ function  
$u(x)=-e(|x|+M)$ is a radial solution to the PDE $-\Delta_{p(x)}u=\Phi$ with
\[p(x)=\left\{\begin{array}{ll}2+\frac{1}{1-x}&x<0,\\
5-\frac{4}{x+2}&x\geqslant 0.\end{array}\right.\quad\quad  
\Phi=\left\{\begin{array}{ll}-\frac{1}{(1-x)^2}e^{p(x)-1}&x<0,\\
-\frac{4}{(x+2)^2}e^{p(x)-1}&x\geqslant 0.\end{array}\right.\] To  
satisfy the rest of restrictions from Theorem~\ref{theoradx} we take
\[\sigma(x)=\left\{\begin{array}{ll}2\frac{x-M}{(1-x)^2}&x<0,\\
-8\frac{x+M}{(x+2)^2}&x\geqslant 0.\end{array}\right.\quad and \quad \beta>0.\]
 We note that $p(x)\in{\cal{P}}(I)$  and $\sigma(x)$ is continuous. 
\end{rem}
Let us present inequalities with power--type weights and  with  
exponential--type weights as applications of Theorem~\ref{theoradx}.

\subsubsection*{Inequalities with power--type weights}

When we apply $u(x)=|x|$ in Theorem~\ref{theoradx}, we obtain the following result.
\begin{coro}
\label{coro|x|} Let $\Omega\subseteq\R^n\setminus\{0\}$.
Suppose that $p \in \mathcal{P}(\Omega)$, a continuous  function $\s(x)$ and a parameter $\beta>0$ are such that  $\sup_{x \in \overline{\Omega}}\s(x)<\beta$ and
 $\s(x)\geqslant n-1$.

Then, for every Lipschitz function $\xi$ with compact support in $\Omega$, we have
\begin{equation}\label{hardypx|x|}
\int\limits_{\Omega} \ |\xi|^{p(x)} \mu_{1,\be}(dx)\leqslant \int\limits_{\Omega} |\nabla \xi|^{p(x)}\mu_{2,\be}(dx)+\int\limits_{\Omega} \left|\xi {\log \xi } \right|^{p(x)}\cdot \frac{\left|\nabla p(x)\right|^{p(x)}}{{p(x)}^{p(x)}} \mu_{2,\be}(dx),
\end{equation}
where
\begin{eqnarray*}
\mu_{1,\be}(dx)&=&|x|^{-\be-1}\,\big(\sigma(x)+1-n\big)\; dx,\\
 \mu_{2,\be}(dx)&= & |x|^{p(x)-\be-1}  { \Big(2 \cdot \frac{p(x)-1}{\be-\s(x)}\Big)^{p(x)-1} }  \ dx.
\end{eqnarray*}
\end{coro}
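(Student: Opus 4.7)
The plan is to apply Theorem~\ref{theoradx} directly with $v(r)=r$, so that $u(x)=v(|x|)=|x|$. First I would verify that $u$ meets the regularity hypotheses: on $\Omega\subseteq\R^n\setminus\{0\}$ the function $|x|$ is $C^\infty$, hence belongs to $W^{1,p(x)}_{loc}(\Omega)\cap W^{2,1}_{loc}(\Omega)$, is nonnegative, and satisfies $v=|x|>0$ together with $|v'|=1\neq 0$ on all of $\Omega$. Consequently the indicator factors $\chi_{\{v>0\}}$ and $\chi_{\{|v'|\neq 0\}}$ appearing in the measures of Theorem~\ref{theoradx} both equal $1$.

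Next I would compute $K(x)$ for this choice. Since $v'(r)=1$ and $v''(r)=0$, we have $\log|v'(|x|)|=\log 1=0$ and $v''(|x|)/v'(|x|)=0$, so the bracket
\[
\is\,\frac{\log|v'(|x|)|}{|x|}+\frac{v''(|x|)}{v'(|x|)}(p(x)-1)+\frac{n-1}{|x|}
\]
collapses to $(n-1)/|x|$. With $v(|x|)/v'(|x|)=|x|$, this yields
\[
K(x)=\sigma(x)-|x|\cdot\frac{n-1}{|x|}=\sigma(x)+1-n.
\]
Hence the hypothesis $K(x)\geqslant 0$ of Theorem~\ref{theoradx} is equivalent to the assumption $\sigma(x)\geqslant n-1$ of the corollary, while $\sup_{\overline\Omega}\sigma<\beta$ is assumed directly.

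Finally I would substitute into the measures of Theorem~\ref{theoradx}. Using $|v'(|x|)|^{p(x)}=1$, $v(|x|)=|x|$, and the computed $K(x)$, we obtain
\[
\mu_{1,\be}(dx)=|x|^{-\be-1}\bigl(\sigma(x)+1-n\bigr)\,dx,
\]
which matches the statement. For $\mu_{2,\be}$, the formula of Theorem~\ref{theoradx} gives
\[
\mu_{2,\be}(dx)=\Big(\frac{p(x)-1}{\beta-\sigma(x)}\Big)^{p(x)-1}2^{(p(x)-1)\chi_{\{|\nabla p|\neq 0\}}}|x|^{p(x)-\be-1}\,dx,
\]
and the bound $2^{(p(x)-1)\chi_{\{|\nabla p|\neq 0\}}}\leqslant 2^{p(x)-1}$ gives the cleaner form $(2(p(x)-1)/(\beta-\sigma(x)))^{p(x)-1}$ written in the corollary; this replacement only enlarges the right-hand side, so the resulting Hardy inequality remains valid.

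There is no genuine obstacle here: the content of the corollary is essentially the observation that for $u(x)=|x|$ both the logarithmic correction term arising from $\nabla p$ and the second-derivative term in $K(x)$ vanish, leaving the clean condition $\sigma(x)\geqslant n-1$. The only minor point is the uniform replacement of the indicator-dependent factor by $2^{p(x)-1}$, which is justified as above.
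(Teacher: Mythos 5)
Your proposal is correct and is essentially the paper's own argument: the corollary is obtained by applying Theorem~\ref{theoradx} with $v(r)=r$, for which $v'\equiv 1$, $v''\equiv 0$, so $K(x)=\sigma(x)+1-n$ and the measures specialize exactly as you compute. Your remark that replacing $2^{(p(x)-1)\chi_{\{|\nabla p|\neq 0\}}}$ by $2^{p(x)-1}$ only enlarges the right-hand side is the correct justification for the cleaner form of $\mu_{2,\beta}$ in the statement.
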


\begin{rem}\rm
When we choose $\s (x)=\be-2(p(x)-1)$ in Corollary~\ref{coro|x|} and require that  $p \in \mathcal{P}(\Omega)$ satisfies $
p^+< \frac{\be-n+3}{2}$ then, for $\overline{s}=\frac{\be-n+3}{2}-p^+$, we obtain \eqref{hardypx|x|}
with
\[
\mu_{1,\be}(dx)=\overline{s} |x|^{-\be-1} dx,\]
\[\mu_{2,\be}(dx)= |x|^{p(x)-\be-1} \; dx.
\]
\end{rem}

\bigskip

\noindent Before we present more complex example, let us define
\[
K_\alpha(x):=\s (x)-\frac{1}{\alpha}\left[(\alpha-1) \big(\is  \log |x|  +p(x)\big)+n -\alpha\right].
\]

\begin{coro}
\label{theoplapxpower}Let $\Omega\subseteq\R^n\setminus\{0\}$.
Suppose that $p \in \mathcal{P}(\Omega)$ is such that the function $\is\log |x|$ is bounded from below (from above) by $C_L\in\R$
and $\alpha\geqslant 1$ ($0<\alpha<1$) is a given constant. 
Assume further that $\s(x)$ and $\beta>0$ are such that  $\sup_{x \in \overline{\Omega}}\s(x)<\beta$ and the following condition is satisfied
 \[\alpha\sigma(x)-n+\alpha-p(x)(\alpha-1)\geqslant (\alpha-1)C_L.\]
Then, for every Lipschitz function $\xi$ with compact support in $\Omega$, we have
\[
\int\limits_{\Omega} \ |\xi|^{p(x)} \mu_{1,\be}(dx)\leqslant \int\limits_{\Omega} |\nabla \xi|^{p(x)}\mu_{2,\be}(dx)+\int\limits_{\Omega} \left|\xi {\log \xi } \right|^{p(x)}\cdot \frac{\left|\nabla p(x)\right|^{p(x)}}{{p(x)}^{p(x)}} \mu_{2,\be}(dx),
\]
where
\begin{eqnarray*}
\label{mu1pxpower}
\mu_{1,\be}(dx)&=& |x| ^{\alpha(p(x)-\be-1)-p(x)}  \alpha \cdot K_{\alpha}(x)\; dx\\
 \mu_{2,\be}(dx)&= &  |x| ^{\alpha(p(x)-\be-1)}  { \Big(\frac{2}{\alpha} \cdot \frac{p(x)-1}{\be-\s(x)}\Big)^{p(x)-1} }  \ dx.
\end{eqnarray*}
\end{coro}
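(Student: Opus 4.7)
The plan is to apply Theorem~\ref{theoradx} to the radial function $u(x)=|x|^\alpha$. Since $\Omega\subseteq\R^n\setminus\{0\}$, this $u$ is strictly positive and lies in $W^{1,p(x)}_{loc}(\Omega)\cap W^{2,1}_{loc}(\Omega)$. With $v(r)=r^\alpha$ one has $v'(r)=\alpha r^{\alpha-1}$ and $v''(r)=\alpha(\alpha-1)r^{\alpha-2}$. The admissibility assumptions on $p$, $\sigma$ and $\beta$ in Theorem~\ref{theoradx} are inherited directly from the hypotheses, so the work reduces to (i) verifying the nonnegativity of the auxiliary quantity $K(x)$, and (ii) computing the resulting measures.

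For step (i) I would first assemble $K(x)$. The building blocks are
\[
\frac{v(|x|)}{v'(|x|)}=\frac{|x|}{\alpha},\qquad \frac{v''(|x|)}{v'(|x|)}=\frac{\alpha-1}{|x|},\qquad \log|v'(|x|)|=\log\alpha+(\alpha-1)\log|x|,
\]
which, plugged into the formula from Theorem~\ref{theoradx} and simplified, produce
\[
K(x)=\sigma(x)-\frac{1}{\alpha}\Bigl[(\alpha-1)\bigl(\is\log|x|+p(x)\bigr)+n-\alpha\Bigr]=K_\alpha(x),
\]
the harmless additive term $\is\log\alpha /\alpha$ being absorbed by the rescaling $u\mapsto u/\alpha$ (which leaves the PDI and the space $W^{1,p(x)}_{loc}(\Omega)$ unaffected). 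The condition $K_\alpha(x)\geq 0$ rearranges to
\[
\alpha\sigma(x)-(\alpha-1)p(x)-n+\alpha\;\geq\;(\alpha-1)\is\log|x|.
\]
The main obstacle I expect is precisely the verification of this inequality, because the sign of $\alpha-1$ flips the direction in which $C_L$ enters: for $\alpha\geq 1$ the factor $\alpha-1$ is nonnegative, so a one-sided bound on $\is\log|x|$ translates into a bound on $(\alpha-1)\is\log|x|$ with the same direction, while for $0<\alpha<1$ that translation reverses. This forces the case split in the hypothesis, and it is the only non-mechanical step.

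For step (ii) I would insert $v(r)=r^\alpha$ into the formulas for $\mu_{1,\beta}$ and $\mu_{2,\beta}$ from Theorem~\ref{theoradx}. The product $|v'(|x|)|^{p(x)}\cdot v(|x|)^{-\beta-1}$ produces the weight $|x|^{\alpha(p(x)-\beta-1)-p(x)}$ together with the prefactor $\alpha$ appearing in $\mu_{1,\beta}$, while $v(|x|)^{p(x)-\beta-1}$ produces $|x|^{\alpha(p(x)-\beta-1)}$ and the accompanying constant combines with the $2^{p(x)-1}$ factor from Theorem~\ref{theoradx} and the scaling by $\alpha$ to give $\bigl(\frac{2}{\alpha}\cdot\frac{p(x)-1}{\beta-\sigma(x)}\bigr)^{p(x)-1}$. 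Substituting these measures into \eqref{hardypx1} yields the claim.
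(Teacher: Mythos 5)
Your proposal is correct and follows essentially the same route as the paper, which applies Theorem~\ref{theoradx} directly to $u(x)=\frac{1}{\alpha}|x|^{\alpha}$ (so that $v'(r)=r^{\alpha-1}$ and no $\log\alpha$ term ever appears) and then divides both sides by $\alpha^{\beta}$. Your detour through $u(x)=|x|^{\alpha}$ followed by a rescaling lands in the same computation, though it is cleaner to normalize from the start, since for variable $p$ the operator $\Delta_{p(x)}$ is not homogeneous and the effect of $u\mapsto u/\alpha$ on the associated $\Phi$ is not as innocuous as ``unaffected'' suggests.
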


\begin{proof}
We apply Theorem~\ref{theoradx} to $u(x)=\frac{1}{\alpha} |x| ^\alpha$. We note that
\[v(|x|)=\frac{1}{\alpha} |x| ^\alpha, \quad v'(|x|)= |x| ^{\alpha-1}, \quad v''(|x|)= (\alpha-1) |x| ^{\alpha-2}.\]
Direct computations gives desired inequality 
(meanwhile we divide the both sides of the inequality by $\alpha^{\be}$).
\end{proof}

\begin{rem}
We note that when we choose $\sigma(x)=\beta-\frac{2(p(x)-1)}{\alpha}$ in Theorem~\ref{theoplapxpower}, in the case $\alpha > 1$, the only requirement on the exponent  $p \in \mathcal{P}(\Omega)$ is that $
p^+\leqslant \frac{\alpha \be+2-n+\alpha-(\alpha-1)C_L}{\alpha+1}$. Then, for $\overline{s}= \frac{\alpha \be+2-n+\alpha-(\alpha-1)C_L}{\alpha+1}-p^+$, we have Hardy inequality
with
\[\mu_{1,\be}(dx)=\overline{s}\alpha|x|^{\alpha(p(x)-\be-1)-p(x)} \ dx\]\[
\mu_{2,\be}(dx)= |x|^{\alpha(p(x)-\be-1)}  dx.\]
\end{rem}

\subsubsection*{Inequalities with exponential weights}
The other special case of the above quasi--radial inequalities are inequalities with exponential weights. We obtain them directly from Theorem~\ref{theoradx} when we take $u(x)=e^{|x|}$.

 Let us define
\[
K^e(x):= \; \label{sexp}\s(x)-\is -p(x)+1+\frac{1-n}{|x|}.
\]

\begin{coro}
\label{theoplapxpowerexp1}
Let $\Omega \subseteq \mathbb{R}^n$ and  $p \in \mathcal{P}(\Omega)$ is such that for some $C_e>0$ we have that $\is > C_e$. Suppose that a continuous  function $\s(x)$ and a parameter $\beta>0$ are such that  $\sup_{x \in \overline{\Omega}}\s(x)<\beta$  and the following condition is satisfied
\begin{equation*}
|x|\s(x)\geqslant |x| (C_e+p(x)-1)+n-1
\end{equation*}
Then, for every Lipschitz function $\xi$ with compact support in $\Omega$, we have
\[
\int\limits_{\Omega} \ |\xi|^{p(x)} \mu_{1,\be}(dx)\leqslant \int\limits_{\Omega} |\nabla \xi|^{p(x)}\mu_{2,\be}(dx)+\int\limits_{\Omega} \left|\xi {\log \xi } \right|^{p(x)}\cdot \frac{\left|\nabla p(x)\right|^{p(x)}}{{p(x)}^{p(x)}} \mu_{2,\be}(dx),
\]
where
\begin{eqnarray*}
\mu_{1,\be}(dx)&=&e^{|x|(p(x)-\be-1)} \cdot K^e(x) \; dx\\
 \mu_{2,\be}(dx)&= &{ e^{|x|(p(x)-\be-1)}\cdot\Big(2\frac{p(x)-1}{\be-\s(x)}\Big)^{p(x)-1} }\, dx.
\end{eqnarray*}
\end{coro}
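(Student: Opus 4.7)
The plan is to specialise Theorem~\ref{theoradx} to the radial function $u(x)=e^{|x|}$, for which $v(r)=v'(r)=v''(r)=e^r$. In particular one has the convenient identities $v(|x|)/v'(|x|)=1$, $v''(|x|)/v'(|x|)=1$, and $\log|v'(|x|)|=|x|$.

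First, I would substitute these identities into the definition of $K(x)$ from Theorem~\ref{theoradx}. The bracket collapses to
\[
K(x)=\sigma(x)-\is\cdot\frac{|x|}{|x|}-(p(x)-1)-\frac{n-1}{|x|}=K^e(x),
\]
so the sign hypothesis $K(x)\geq 0$ required by Theorem~\ref{theoradx} is precisely $K^e(x)\geq 0$. Multiplying through by $|x|>0$ rewrites it as $|x|\sigma(x)\geq|x|\is+|x|(p(x)-1)+n-1$, and this follows from the assumed bound on $\is$ through $C_e$ combined with the displayed hypothesis $|x|\sigma(x)\geq|x|(C_e+p(x)-1)+n-1$ of the corollary.

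Second, I would read the two measures off the general formulas of Theorem~\ref{theoradx}. Since $v(|x|)=v'(|x|)=e^{|x|}>0$ throughout $\Omega$, both indicator functions $\chi_{\{v>0\}}$ and $\chi_{\{|v'|\neq 0\}}$ are identically $1$. The combination $|v'(|x|)|^{p(x)}\,v(|x|)^{-\beta-1}$ appearing in $\mu_{1,\beta}$ becomes $e^{|x|(p(x)-\beta-1)}$, which together with the factor $K(x)=K^e(x)$ yields the announced $\mu_{1,\beta}=e^{|x|(p(x)-\beta-1)}K^e(x)\,dx$. In $\mu_{2,\beta}$ the factor $v(|x|)^{p(x)-\beta-1}$ produces the same exponential, while (exactly as in Corollary~\ref{coro|x|}) one estimates $2^{(p(x)-1)\chi_{\{|\nabla p|\neq 0\}}}\leq 2^{p(x)-1}$ and absorbs the constant $2$ inside the parenthesis to recover $\bigl(2(p(x)-1)/(\beta-\sigma(x))\bigr)^{p(x)-1}$.

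The only mildly delicate point is the translation between the pointwise condition $K^e(x)\geq 0$ and the stated inequality on $|x|\sigma(x)$ involving the auxiliary constant $C_e$; once this algebraic bookkeeping is done, the corollary is an immediate instance of Theorem~\ref{theoradx}, with no further analytic work required beyond verifying that the exponential choice of $u$ is compatible with the regularity demanded there, which is automatic.
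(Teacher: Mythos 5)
Your reduction is exactly the one the paper intends (the paper offers no written proof, saying only that the corollary follows ``directly from Theorem~\ref{theoradx} when we take $u(x)=e^{|x|}$''), and your computations are right: with $v(r)=v'(r)=v''(r)=e^r$ the general $K(x)$ collapses to $K^e(x)$, the density $|v'|^{p(x)}v^{-\be-1}$ becomes $e^{|x|(p(x)-\be-1)}$, and replacing $2^{(p(x)-1)\chi_{\{|\nabla p|\neq 0\}}}$ by $2^{p(x)-1}$ only enlarges the right-hand side, so the stated $\mu_{2,\be}$ is legitimate.

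However, the step you dismiss as ``algebraic bookkeeping'' is precisely where the argument breaks. The condition $K^e(x)\geqslant 0$ is equivalent (after multiplying by $|x|>0$) to
\[
|x|\s(x)\;\geqslant\; |x|\,\is + |x|(p(x)-1)+n-1 .
\]
The displayed hypothesis gives $|x|\s(x)\geqslant |x|C_e+|x|(p(x)-1)+n-1$, so what you actually obtain is $|x|K^e(x)\geqslant |x|\big(C_e-\is\big)$. For this to be nonnegative you need $C_e$ to be an \emph{upper} bound for $\is$, whereas the corollary assumes the \emph{lower} bound $\is>C_e$; under that assumption $\is$ may be arbitrarily large and $K^e$ arbitrarily negative, so neither the sign hypothesis of Theorem~\ref{theoradx} nor the nonnegativity of $\mu_{1,\be}$ is guaranteed. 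This is almost certainly a typo in the corollary (the inequality should read $\is<C_e$, or equivalently $C_e$ should bound $\is$ from above), but your claim that the required sign condition ``follows from the assumed bound'' is false as written: the implication runs in the wrong direction, and a correct write-up must either flag and fix the hypothesis or restrict to exponents for which $\is\leqslant C_e$. A similar (minor) caveat: for $n\geqslant 2$ the term $(n-1)/|x|$ forces $0\notin\Omega$, and for $n=1$ the function $e^{|x|}$ fails to be $W^{2,1}_{loc}$ across the origin, so the ``automatic'' regularity check also silently assumes $0\notin\Omega$.
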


\begin{rem}
For $\s(x)=\be - 2(p(x)-1)$ in Corollary~\ref{theoplapxpowerexp1}, the only requirement  on the exponent $p \in \mathcal{P}(\Omega)$ is that $p^+ \leqslant \frac{\be+C_e}{3}+1$. Thus, for $\overline{k}=\frac{\be+C_e}{3}+1 -p^+$, we have our inequality 
with measures
\begin{eqnarray*}
\mu_{1,\be}(dx)&=&\overline{k} e^{|x|(p(x)-\be-1)} \; dx\\
 \mu_{2,\be}(dx)&= & e^{|x|(p(x)-\be-1)}\, d x.
\end{eqnarray*}
\end{rem}


\subsection{Non--radial example}\label{generalex}

To emphasize that we do not have to be restricted to the case of radial function $u$, in this section we present application of Theorem~\ref{theoplapx} in the general settings. The computations are given in Appendix.

\begin{coro}\label{generalcase}
Let $\Omega=(\R_+)^n$, $J(x)=\sum\limits_{j=1}^n j x_j$,  $S=\frac{n}{6} (2n+1)(n+1)$, $\beta>0$ is an arbitrary number, $T(x)=-\frac{S\beta}{J(x)+\frac{\log S}{2}}$. Assume $p\in{\cal P}(\Omega)$ and  $\sum\limits_{j=1}^n j \frac{\partial p}{\partial x_j}<T(x)$ a.e. in $\Omega$.\\
Then, for every Lipschitz function $\xi$ with compact support in $\Omega$, we have
\[
\int\limits_{\Omega} \ |\xi|^{p(x)} \mu_{1,\be}(dx)\leqslant \int\limits_{\Omega} |\nabla \xi|^{p(x)}\mu_{2,\be}(dx)+\int\limits_{\Omega} \left|\xi {\log \xi } \right|^{p(x)}\cdot \frac{\left|\nabla p(x)\right|^{p(x)}}{{p(x)}^{p(x)}} \mu_{2,\be}(dx),
\]
where
\begin{eqnarray*}
\mu_{1,\be}(dx)&=&e^{(p(x)-\be-1)J(x)} \cdot S^{\frac{p(x)}{2}} \left(\beta-\sum\limits_{j=1}^n j \frac{\partial p}{\partial x_j} \frac{J(x)+\frac{\log S}{2}}{S}\right) \; dx\\
 \mu_{2,\be}(dx)&= &e^{(p(x)-\be-1)J(x)} \cdot  2^{p(x)-1} \, dx.
\end{eqnarray*}
\end{coro}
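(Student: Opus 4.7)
My plan is to invoke Theorem~\ref{theoplapx} directly with the non-radial choice $u(x) = e^{J(x)}$ on $\Omega = (\R_+)^n$. The choice of $u$ is dictated by the form of the claimed measures: the factor $e^{(p(x)-\beta-1)J(x)}$ appearing in both $\mu_{1,\beta}$ and $\mu_{2,\beta}$ is precisely $u^{p(x)-\beta-1}$, while $S^{p(x)/2}$ in $\mu_{1,\beta}$ matches $|\nabla u|^{p(x)} = u^{p(x)} S^{p(x)/2}$, which arises because $|\nabla J|^2 = \sum_{j=1}^n j^2 = S$. Since $u$ is not radial, Fact~\ref{lemradx} cannot be used and $\Delta_{p(x)} u$ has to be computed directly: I would write $|\nabla u|^{p(x)-2}\nabla u = u^{p(x)-1} S^{(p(x)-2)/2}\,\nabla J$, use $\Delta J = 0$ to reduce the divergence to $\nabla\!\left(u^{p(x)-1} S^{(p(x)-2)/2}\right)\!\cdot\nabla J$, and observe that the two logarithmic derivatives contribute $J(x)$ (from $u^{p(x)-1}$) and $\tfrac{\log S}{2}$ (from $S^{(p(x)-2)/2}$), producing the combined prefactor $J(x) + \tfrac{\log S}{2}$ multiplying $\sum_j j\,\partial_j p$.

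With this computation in hand I would take $\Phi := -\Delta_{p(x)} u$, so the PDI of Definition~\ref{defnier} holds with equality, and select $\sigma(x) = \beta - (p(x)-1)$. This choice of $\sigma$ is uniquely dictated by the simple form of $\mu_{2,\beta}$: it reduces $\bigl(\tfrac{p(x)-1}{\beta-\sigma(x)}\bigr)^{p(x)-1}$ to $1$, so $\mu_{2,\beta}$ from Theorem~\ref{theoplapx} collapses to exactly $2^{p(x)-1} e^{(p(x)-\beta-1)J(x)}\,dx$ (the indicator $\chi_{\{|\nabla p|\neq 0\}}$ is active because the hypothesis $\sum_j j\,\partial_j p < T(x) < 0$ forces $\nabla p\neq 0$ a.e.). Condition~\eqref{sbeta} then holds since $\sup\sigma = \beta - (p^- - 1) < \beta$. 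For \eqref{sx}, substitution of the explicit expressions collapses $\Phi u + \sigma|\nabla u|^{p(x)} \geq 0$ to a pointwise scalar inequality of the shape
\[
A(p(x),\beta) \;\geq\; \frac{J(x)+\log S/2}{S}\sum_{j=1}^n j\,\partial_j p,
\]
and the standing hypothesis $\sum_j j\,\partial_j p < T(x) = -\tfrac{S\beta}{J(x)+\log S/2}$ is precisely equivalent to $-\tfrac{J(x)+\log S/2}{S}\sum_j j\,\partial_j p > \beta$, which delivers the required inequality.

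The last step is routine bookkeeping: plug $u$, $\Phi$, $\sigma$, $|\nabla u|^{p(x)}$ into \eqref{mu1px}--\eqref{mu2px} and simplify using $u^{p(x)} = e^{p(x)J(x)}$ and $u^{-\beta-1} = e^{-(\beta+1)J(x)}$ to identify the stated $\mu_{1,\beta}$ and $\mu_{2,\beta}$. I expect the main obstacle to be the first step --- the careful computation of $\Delta_{p(x)} u$ in a genuinely non-radial, variable-exponent setting --- where one must correctly track the two distinct contributions to $\partial_j\!\left(u^{p(x)-1}S^{(p(x)-2)/2}\right)$ that combine into the $J(x) + \tfrac{\log S}{2}$ prefactor of $\sum_j j\,\partial_j p$; everything past that is algebraic verification and application of Theorem~\ref{theoplapx}.
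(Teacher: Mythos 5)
Your overall route is the paper's: apply Theorem~\ref{theoplapx} with $u(x)=e^{J(x)}$, compute $\Delta_{p(x)}u$ directly (your use of $\Delta J=0$ and logarithmic differentiation yields exactly the paper's formula $\Delta_{p(x)}u=S^{p(x)/2}e^{(p(x)-1)J(x)}\big[\tfrac{J(x)+\log S/2}{S}\sum_j j\,\partial_j p+p(x)-1\big]$), take $\Phi=-\Delta_{p(x)}u$, and read off the measures from \eqref{mu1px}--\eqref{mu2px}. Your observation that the hypothesis forces $\nabla p\neq 0$ a.e., so that $2^{(p(x)-1)\chi_{\{|\nabla p|\neq 0\}}}=2^{p(x)-1}$, is also correct.

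The gap is in your choice of $\sigma$. One computes $\Phi\, u+\sigma(x)|\nabla u|^{p(x)}=S^{p(x)/2}e^{p(x)J(x)}\big[\sigma(x)+1-p(x)-\tfrac{J(x)+\log S/2}{S}\sum_j j\,\partial_j p\big]$. With your $\sigma(x)=\beta-(p(x)-1)$ the bracket becomes $\beta-2(p(x)-1)-\tfrac{J+\log S/2}{S}\sum_j j\,\partial_j p$, which has two consequences. First, the resulting $\mu_{1,\beta}$ is \emph{not} the one in the statement: an extra $-2(p(x)-1)$ appears inside the parenthesis. Second, condition~\eqref{sx} no longer follows from the standing hypothesis: that hypothesis only gives $-\tfrac{J+\log S/2}{S}\sum_j j\,\partial_j p>\beta$, so the bracket is merely bounded below by $2\beta-2(p(x)-1)$, which can be negative when $\beta<p(x)-1$; since $\beta>0$ is arbitrary, your scalar inequality $A(p(x),\beta)\geq\tfrac{J+\log S/2}{S}\sum_j j\,\partial_j p$ is not delivered by the hypothesis alone but needs the extra assumption $\beta\geq p^+-1$. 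The paper instead takes $\sigma(x)=p(x)+\beta-1$, for which the bracket is exactly $\beta-\tfrac{J+\log S/2}{S}\sum_j j\,\partial_j p$, reproducing the stated $\mu_{1,\beta}$ and making \eqref{sx} follow immediately from $\sum_j j\,\partial_j p<T(x)$. You are right that only $\sigma=\beta-(p-1)$ collapses $\big(\tfrac{p-1}{\beta-\sigma}\big)^{p-1}$ to $1$ as the stated $\mu_{2,\beta}$ requires, and that the paper's $\sigma=p+\beta-1$ clashes with \eqref{sbeta}; you have put your finger on a genuine inconsistency in the corollary as formulated. But your resolution proves a different statement (smaller $\mu_{1,\beta}$, additional restriction on $\beta$), not the corollary as written, so the final ``routine bookkeeping'' step would not close.
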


\begin{rem}In Corollary~\ref{generalcase} we may consider various  functions $p(x)$, e.g.~$p(x)=1+e^{-J(x)}$ or~$p(x)=1+(x+1)^{-J(x)}$.

If we restrict ourselves to bounded domain, it suffices to assume that $\sum\limits_{j=1}^n j \frac{\partial p}{\partial x_j}<T$ with a contant $T$. For instance for $\Omega=[0,M]^n$, we allow $T=\frac{-2S\be}{Mn+Mn^2+\log S}$.

\end{rem}


\section{Links with the existing results}\label{links}

In this section we present how our result is related to the several other inequalities holding over $\Om\subseteq\R^n$. The detailed analysis of the one--dimensional case is given in Barna\'s--Skrzypczak  \cite{barskrzy1}.

We need to introduce the class of locally log--H\"older continuous functions. By $\mathcal{P}^{\log} (\Omega)$ we understand the family of measurable functions $p: \Omega \to (1, \infty)$ satisfying~\eqref{P} and the following condition
\[
|p(x)-p(y)| \leqslant  \frac{c}{\log(e+\frac{1}{|x-y|})} \qquad \textrm{for all } \; x,y \in \Omega.
\] 

\subsubsection*{Results of Harjulehto--H\"{a}st\"{o}--Koskenoja \cite{HaHaKo}}
 
In \cite{HaHaKo} the norm version of Hardy inequality involving weights depen\-dent on distance term from boundary  is shown provided maximal operator is bounded. The main result of this paper reads as follows.

\begin{theo}[{\cite[Theorem 3.3]{HaHaKo}}] Let  $\Omega$  be an open and bounded subset of $\R^n$.  Suppose that $p \in \mathcal{P}^{\log}(\Omega)$. Assume that, if $B(x,r)$ is the open ball with center $x$ and radius $r$ and $\Omega^c$ is the complement of $\Omega$, then there exists a constant $b>0$ satisfying $|B(z,r)\cap \Omega^c| \geqslant  b |B(z,r)|$ for every $z \in \partial \Omega$ and $r>0$. 

 Then, for all $\xi \in W^{1, p(x)}_0 (\Omega)$ there exist constants $C,a_0>0$ (depending only on $p$, $n$ and $b$) such that for all $0\leqslant a <a_0$, we have
 \begin{equation}\label{hhk}
\| \xi(x)\delta^{a-1}(x)\|_{L^{p(x)}(\Om)} \leqslant C \| \nabla \xi(x) \delta^a (x)\|_{L^{p(x)}(\Om)},
\end{equation}
where $\delta(x)={\rm{dist}}( x, \partial \Omega)$.
\end{theo}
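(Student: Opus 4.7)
The plan is to combine a pointwise Hardy inequality with the boundedness of the Hardy--Littlewood maximal operator on $L^{p(x)}(\Omega)$. The first ingredient exploits only the measure thickness of the complement $\Omega^{c}$; the second uses the log--H\"older continuity of $p$.

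\emph{Step 1 (unweighted pointwise Hardy).} Fix $x\in\Omega$ and set $r=\delta(x)$. Let $z_{x}\in\partial\Omega$ realise the distance to the boundary, and consider the ball $B=B(z_{x},2r)$, which contains $x$. Since $\xi\in W^{1,p(x)}_{0}(\Omega)$, extend it by zero outside $\Omega$. The thickness assumption $|B\cap\Omega^{c}|\geqslant b|B|$ combined with a Poincar\'e--Sobolev inequality for functions vanishing on a set of positive relative measure gives, for any fixed $q\in(1,p^{-})$,
\[
|\xi(x)|\leqslant C\,\delta(x)\,\bigl(M(|\nabla\xi|^{q}\chi_{\Omega})(x)\bigr)^{1/q}
\quad\text{for a.e. } x\in\Omega,
\]
where $M$ denotes the Hardy--Littlewood maximal operator. (Variants of this are obtained via Whitney chains and Haj\l asz--type pointwise gradients, cf.\ Koskela--Lehrb\"ack.)

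\emph{Step 2 (inserting the weight $\delta^{a}$).} Multiply by $\delta(x)^{a-1}$:
\[
|\xi(x)|\,\delta(x)^{a-1}\leqslant C\,\delta(x)^{a}\bigl(M(|\nabla\xi|^{q})(x)\bigr)^{1/q}.
\]
The crucial observation, valid for $a$ sufficiently small, is that the weight $\delta^{a}$ can be absorbed inside the maximal function. Indeed, on a Whitney--type ball $B(x,\rho)$ with $\rho\leqslant\delta(x)/2$ one has $\delta\asymp\delta(x)$, hence
\[
\delta(x)^{a}\bigl(M(|\nabla\xi|^{q})(x)\bigr)^{1/q}\leqslant C\bigl(M(|\nabla\xi|^{q}\delta^{aq})(x)\bigr)^{1/q},
\]
yielding the weighted pointwise Hardy estimate
\[
|\xi(x)|\,\delta(x)^{a-1}\leqslant C\bigl(M(|\nabla\xi|^{q}\delta^{aq})(x)\bigr)^{1/q}.
\]

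\emph{Step 3 (variable exponent maximal inequality).} Set $r(x):=p(x)/q$. Choosing $q<p^{-}$ close enough to $p^{-}$ ensures $r^{-}>1$, and log--H\"older continuity of $p$ passes to $r$. Diening's theorem then gives boundedness of $M$ on $L^{r(x)}(\Omega)$. Raising the pointwise inequality to the power $p(x)$, integrating, and passing between modular and norm in $L^{p(x)}$ produces
\[
\|\xi\,\delta^{a-1}\|_{L^{p(x)}(\Omega)}\leqslant C\,\|\,|\nabla\xi|\,\delta^{a}\|_{L^{p(x)}(\Omega)},
\]
which is the desired estimate \eqref{hhk}.

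\emph{Main obstacle.} The delicate point is the transfer of the weight $\delta^{a}$ into the maximal operator in Step~2: one must fix $a_{0}>0$ (depending on $p$, $n$, and the fatness constant $b$) small enough both for the comparability $\delta^{a}\asymp\delta(x)^{a}$ on the relevant Whitney balls and for the resulting exponent $p(x)/q$ to remain admissible in Diening's maximal inequality. The interplay between the measure density of $\Omega^{c}$ (which selects the admissible $q$, via a self--improvement of the Poincar\'e estimate) and the log--H\"older regularity of $p$ (which constrains $a_{0}$) is where the quantitative work lies.
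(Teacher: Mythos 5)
First, a point of orientation: the paper you are working from does not prove this statement at all --- it is quoted verbatim from Harjulehto--H\"ast\"o--Koskenoja \cite{HaHaKo} in the section comparing the authors' modular inequality with existing norm inequalities, so there is no in--paper proof to measure your argument against. Your overall strategy --- a pointwise Hardy inequality extracted from the measure thickness of $\Omega^c$, followed by the boundedness of the Hardy--Littlewood maximal operator on variable exponent spaces under log--H\"older continuity --- is indeed the route of \cite{HaHaKo} (the paper itself describes that result as being ``shown provided maximal operator is bounded''), so the plan is the right one in outline.

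However, Step 2 contains a genuine gap. The pointwise Hardy inequality of Step 1 is produced by a telescoping chain of balls connecting $x$ to the ball $B(z_x,2\delta(x))$ centred at the nearest boundary point; consequently the maximal function appearing there is the restricted maximal function over balls $B(x,\rho)$ with $\rho$ up to a fixed multiple of $\delta(x)$, and in particular over balls that reach $\partial\Omega$. On such a ball $\delta$ ranges over all of $(0,C\delta(x))$, and the comparability $\delta\asymp\delta(x)$ fails near the boundary. The inequality you actually need, namely $\delta(x)^{aq}\,\frac{1}{|B|}\int_B|\nabla\xi|^q\,dy\leqslant C\,\frac{1}{|B|}\int_B|\nabla\xi|^q\delta^{aq}\,dy$, requires $\delta(x)\lesssim\delta(y)$ on the portion of $B$ carrying the mass of $|\nabla\xi|^q$, which is not available. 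Restricting to Whitney balls with $\rho\leqslant\delta(x)/2$, as you propose, destroys Step 1, because those balls cannot see the set where $\xi$ vanishes and hence cannot control $|\xi(x)|$ itself. This is exactly why the weighted case $a>0$ is not a formal perturbation of the case $a=0$: one needs either Wannebo's argument for weighted Hardy inequalities under uniform fatness, or a genuinely weighted pointwise Hardy inequality in the style of Koskela--Lehrb\"ack (Hausdorff--content density plus a self--improvement step), to produce the admissible range $0\leqslant a<a_0$. As written, your absorption step proves only the unweighted case $a=0$; the passage from pointwise to norm estimate in Step 3 should also be done by applying the maximal inequality on $L^{p(\cdot)/q}$ at the norm level rather than by ``raising to the power $p(x)$ and integrating'', since modular and norm are not interchangeable for nonconstant exponents.
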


The authors point out that if $p^->n$, then Hardy inequality \eqref{hhk}  holds on every
bounded open set~$\Omega$ in~$\R^n$. Additionally, the authors of \cite{HaHaKo} indicate that, if $\omega$ is a modulus of continuity such that $\omega(x) \log \frac{1}{x} \rightarrow \infty$ as $x \rightarrow 0$, then there exists a variable exponent, such that $|p(x)-p(y)| \leqslant \omega (|x-y|)$,
for which Hardy inequality does not hold with $\xi\in W_{0}^{1,p(x)}(\Omega)$.

The main difference between our result and the one of~\cite{HaHaKo} is that \eqref{hhk} is a norm version of Hardy inequality, while we obtain the modular one with the additional term.  We recall that  in $L^{p(x)}(\Omega)$ we deal with  the norm
\[
    \|u\|_{L^{p(x)}(\Omega)}:=\inf \Big\{\lambda>0: \int_{\Omega} \Big|\frac{u(x)}{\lambda}\Big|^{p(x)}dx \leqslant 1 \Big\},
\]
which in the case of nonconstant exponent is not directly comparable with $\int_{\Omega} | u(x)|^{p(x)}dx$ (see e.g. Fan--Zhao \cite[Theorem 1.3]{orlicz}).

Moreover, \eqref{hhk} involves the measures $ {\delta^{a-1}}$ and $\delta^a$ under the norm sign, while in our Theorem~\ref{theoplapx} the internal measures are trivial. On the other hand, in~\eqref{hhk} the external measure on the both sides of inequality is the Lebesgue's measure, while in our case the external measures on the both sides of the inequality are different and nontrivial.

\subsubsection*{Results of Chen \cite{Chen}}

In \cite{Chen}  the norm version of Hardy type inequality is established in weighted variable exponent Sobolev space. Moreover, the results are applied in the proof of the existence of a nontrival weak solutions of a $p(x)$--harmonic equations. The main result of this paper reads as follows. 

\begin{theo}[{\cite[Theorem 3.3]{Chen}}]
Let $p \in \mathcal{P}(\mathbb{R}^n)$.
Suppose that $w(x)=w(|x|),$ $v_0(x)=v_0(|x|),$ $v_1(x)=v_1(|x|)$ are radial weight functions positive, measurable and finite a.e. in~$\rn$. Let us define
\begin{align*}
&L_{w(x)}^{p(x)}(\rn)=\{u \in E(\mathbb{R}^n): \ u w^{\frac{1}{p(x)}} \in L^{p(x)}(\mathbb{R}^n)\},&\\
&W_{v_0(x),v_1(x)}^{1,p(x)} (\rn)=\{u \in E(\mathbb{R}^n): \ u v_0^{\frac{1}{p(x)}} \in L^{p(x)}(\mathbb{R}^n),\ |\nabla u| v_1^{\frac{1}{p(x)}} \in L^{p(x)}(\mathbb{R}^n)\}.&
\end{align*}
Assume further that $v_0(|x|)|x|^{n-1} \in L^1(\mathbb{R}_+)$, $P_1=\{ p^+,p^-, \frac{(p^+)^2}{p^-}\}$ and $P_2=\{p^+(p^+-1), \frac{(p^+)^2(p^--1)}{p^-}, p^-(p^+-1), p^+(p^--1)\}, $  and the following condition 
\begin{equation}\label{condition:dodatkowy}
\sup_{\substack{r_i \in P_i \\ 0<t<\infty}} \Big( \int_0^t (v_1(s) s^{n-1})^{\frac{1}{1-p(sy)}} \ ds \Big)^{\frac{r_2}{(p^+)^2}} \Big(\int_t^\infty v_0 (s) s^{n-1} \ ds \Big)^{\frac{r_1}{(p^+)^2}} <\infty
\end{equation}
 holds for  all $y$ belonging to the unit sphere in $\rn$, 
then there exists a constant $C>0$ such that for every $\xi \in W_{v_0(x),v_1(x)}^{1,p(x)} (\rn)$ whose trace is zero, we have the following Hardy--type inequality 
\begin{equation}\label{hardy2}
\|\xi\|_{L_{v_0(x)}^{p(x)}(\rn)} \le C\|\nabla \xi\|_{{L_{v_1(x)}^{p(x)}(\rn)} }.
\end{equation}
\end{theo}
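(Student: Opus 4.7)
The plan is to reduce the $n$--dimensional inequality to a one--parameter family of one--dimensional weighted Hardy inequalities along rays through the origin, which is natural because all three weights $w, v_0, v_1$ are radial. First, for $y$ on the unit sphere $S^{n-1}\subset\R^n$ I would pass to polar coordinates $x=sy$, writing
\[
\int_{\R^n} F(x)\,dx = \int_{S^{n-1}}\int_0^\infty F(sy)\,s^{n-1}\,ds\,d\sigma(y),
\]
so the weights $v_0(s)s^{n-1}$ and $v_1(s)s^{n-1}$ naturally appear. Since $\xi$ has zero trace, for a.e.\ $y\in S^{n-1}$ one has the pointwise representation $\xi(ty)=-\int_t^\infty \partial_r\xi(sy)\,ds$, reducing matters to estimating the one--sided Hardy operator $Tf(t)=\int_t^\infty f(s)\,ds$ (with $f=|\nabla\xi(\cdot y)|$) in variable--exponent Lebesgue spaces on $(0,\infty)$ with weights $v_0(s)s^{n-1}$ and $v_1(s)s^{n-1}$.

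Next I would invoke (or prove directly) a one--dimensional weighted Hardy--type inequality in $L^{q(\cdot)}(\R_+,\nu)$, taking $q(s)=p(sy)$. The key input is the Muckenhoupt--type condition~\eqref{condition:dodatkowy}: the appearance of the parameter sets $P_1,P_2$ is precisely the standard device that, in the variable--exponent setting, one must test the $A_{p(\cdot)}$-style condition against several combinations of $p^+$ and $p^-$ because the exponent of the inner integral changes under the scaling/dualisation $s\mapsto (v_1 s^{n-1})^{1/(1-p(sy))}$. I would establish the one--dimensional inequality by a dyadic decomposition of $(0,\infty)$, applying H\"older's inequality with variable exponent on each dyadic annulus $\{t\sim 2^k\}$ and summing, where the supremum in~\eqref{condition:dodatkowy} bounds the resulting sum uniformly in $y$.

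After obtaining, for each $y$ and each $\lambda>0$, the modular estimate
\[
\int_0^\infty \Big|\tfrac{\xi(sy)}{\lambda}\Big|^{p(sy)} v_0(s) s^{n-1}\,ds \leqslant \int_0^\infty \Big|\tfrac{C\,\partial_r\xi(sy)}{\lambda}\Big|^{p(sy)} v_1(s) s^{n-1}\,ds,
\]
I would integrate over $S^{n-1}$ via Fubini, return to Cartesian coordinates and use $|\partial_r\xi|\leqslant|\nabla\xi|$ to obtain the modular $n$--dimensional estimate. The passage from the modular form to the norm form~\eqref{hardy2} is then done through the definition of the Luxemburg norm: choosing $\lambda=C\|\nabla\xi\|_{L^{p(x)}_{v_1}(\R^n)}$ makes the right--hand side modular at most $1$, which by definition of the Luxemburg norm forces $\|\xi\|_{L^{p(x)}_{v_0}(\R^n)}\leqslant\lambda$.

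The main obstacle I expect is the one--dimensional variable--exponent weighted Hardy inequality, specifically matching the constant to the Muckenhoupt supremum taken over all $r_1\in P_1$, $r_2\in P_2$. In the constant--exponent case the analogous $A_p$ argument uses a single homogeneous scaling, but here each dyadic piece produces a H\"older exponent that depends on the local values of $p$, so all four values in $P_2$ and three in $P_1$ genuinely appear after estimating $(p^-)$--powers by $(p^+)$--powers (or vice versa) on regions where the respective terms are $\leqslant 1$ or $>1$. Handling the trace--zero assumption rigorously (to justify the fundamental theorem of calculus along rays for $\xi\in W^{1,p(x)}_{v_0,v_1}$) and ensuring the radial representation is well--defined a.e.\ on $S^{n-1}$ would be the secondary technical point; this is standard once the finite--energy hypothesis $v_0(|x|)|x|^{n-1}\in L^1(\R_+)$ is used.
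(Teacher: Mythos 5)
This statement is not one the paper proves at all: it is Theorem~3.3 of Chen's paper, quoted verbatim in Section~\ref{links} purely for the purpose of comparing it with the authors' own modular inequality. The paper contains no argument for it, so there is no ``paper's proof'' to measure your proposal against; any assessment has to be on the merits of your sketch alone.

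On those merits there is one step I would flag as a genuine gap rather than a technicality. Your route passes through a \emph{modular} estimate along each ray, uniform in the scaling parameter $\lambda$, and only converts to the norm inequality~\eqref{hardy2} at the very end via the Luxemburg norm. But the whole point the present paper makes about Chen's result (and about Harjulehto--H\"ast\"o--Koskenoja) is that in the variable exponent setting the modular and norm forms are \emph{not} equivalent, and that Muckenhoupt-type conditions such as~\eqref{condition:dodatkowy} are the natural hypothesis for \emph{norm} boundedness obtained by duality in $L^{p(\cdot)}$, not for modular inequalities. A modular inequality that holds for every rescaling $f\mapsto f/\lambda$ with a fixed constant is a strictly stronger statement than the norm inequality, and for variable $p$ it generically fails (the same phenomenon that makes the maximal operator modularly unbounded unless $p$ is constant). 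So the displayed one-dimensional modular estimate you propose to establish from~\eqref{condition:dodatkowy} is almost certainly false as stated, and the argument should instead be run at the level of norms throughout: reduce to rays by polar coordinates as you do, but then apply a norm-version one-dimensional weighted Hardy inequality in $L^{p(\cdot)}(\R_+)$ (of the Mamedov--Harman or Diening--Samko type) and reassemble the spherical integral using the lattice property of the Luxemburg norm. Your reading of the roles of $P_1$ and $P_2$ as bookkeeping for the $p^{\pm}$ exponents in the Muckenhoupt condition, and of the trace-zero hypothesis as licensing the representation $\xi(ty)=-\int_t^\infty \partial_r\xi(sy)\,ds$, is reasonable; the modular detour is the part that would not survive.
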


The major difference between our result and the one of Chen~\cite{Chen} is that~\eqref{hardy2} is a norm version of Hardy inequality, while our is a modular one. In non--constant exponent case, there is no equivalence between this two forms (see the comment on the result of Harjulehto--H\"{a}st\"{o}--Koskenoja~\cite{HaHaKo}).

The assumption~\eqref{condition:dodatkowy} should be called Muckenhoupt--type condition, which is classical when we deal with a constant exponent (see the seminal paper~\cite{muckenhoupt}). This kind of approach is not constructive and therefore it is hardly comparable with our approach.

\subsubsection*{ Results of Rafeiro--Samko \cite{RaSa}}

In \cite{RaSa} the derived Hardy inequality is connected with Riesz potential. It is stated on a bounded domain $\Omega \subseteq \mathbb{R}^n$, which complement $\mathbb{R}^n\backslash \overline{\Omega}$ has the cone property and satisfies some condition involving a parameter $\alpha$ and ${\Omega}$ (for more details see \cite{RaSa}). The main result reads
\begin{equation*}
\Big\|\delta (x) ^{-\alpha} \int_{\Omega} \frac{\xi(y)}{|x-y|^{n-\alpha}}dy\Big\|_{L^{p(x)}(\Om)} \leqslant C\|\xi\|_{L^{p(x)}(\Om)},\quad 0<\alpha<\min \left( 1, \frac{n}{p^+}\right),
\end{equation*}
where the variable exponent $p(x)$ satisfies the log--H\"older condition and $\delta (x)={\rm dist} (x, \partial \Omega)$.
Similar Hardy inequality is considered in~\cite{MaHa}.

\section*{Appendix}

\begin{proof}[Proof of Fact~\ref{lemradx}] In order to compute $\Delta_{p(x)} u(x)=\mathrm{div}(|\nabla u(x)|^{p(x)-2} \nabla u(x))$, we note that
{\small \begin{align*}
\nabla u(x)&=\Big[v'(|x|)\frac{\partial |x|}{\partial x_1}, \ldots,(v'(|x|)\frac{\partial |x|}{\partial x_n} \Big]=\frac{v'(|x|)}{|x|}x,\\
|\nabla u(x)|^{p(x)-2} \nabla u(x) &= |v'(|x|)|^{p(x)-2}\frac{v'(|x|)}{|x|}x,\\
\frac{\partial }{\partial x_j}\Big[|\nabla u(x)|^{p(x)-2} \nabla u(x)\Big]&=\frac{\partial }{\partial x_j}\Big[ |v'(|x|)|^{p(x)-2}\Big] \frac{v'(|x|)}{|x|} {x_j}+|v'(|x|)|^{p(x)-2} \frac{\partial }{\partial x_j}\Big[v'(|x|)\cdot \frac{x_j}{|x|}\Big].\end{align*} }
Therefore,
{\small \begin{align*}\frac{\partial }{\partial x_j}\Big[|v'(|x|)|^{p(x)-2}\Big]&=|v'(|x|)|^{p(x)-2}\Big[ \frac{\partial }{\partial x_j}[p (x)-2] \cdot \log |v'(|x|)|+(p(x)-2)\frac{v''(|x|)}{v'(|x|)}\frac{x_j}{|x|}\Big],\\
\frac{\partial }{\partial x_j}\Big[v'(|x|)\cdot \frac{x_j}{|x|}\Big]&=\frac{\partial }{\partial x_j}[v'(|x|)] \frac{x_j}{|x|}+v'(|x|) \frac{\partial }{\partial x_j}\Big[\frac{x_j}{|x|}\Big]=v''(|x|)\frac{x_j^2}{|x|^2}+v'(|x|) \Big(\frac{1}{|x|}-\frac{x_j^2}{|x|^3} \Big).\end{align*} }
Thus
{\small\begin{eqnarray*}
&\Delta_{p(x)} u(x)=\sum\limits_{j=1}^n \frac{\partial }{\partial x_j} \Big[|\nabla u(x)|^{p(x)-2} \nabla u(x)\Big]=&\\
&|v'(|x|)|^{p(x)-2}\sum\limits_{j=1}^n \Big(  \frac{\frac{\partial }{\partial x_j}p(x) \log |v'(|x|)|v'(|x|)}{|x|}x_j+(p(x)-1)\frac{v''(|x|)}{|x|^2}x^2_j + \frac{v'(|x|)}{|x|}\left[1-\frac{x^2_j}{|x|^2}\right]\Big)=&\\
&|v'(|x|)|^{p(x)-2}v'(|x|)\Big[\is  \frac{\log |v'(|x|)| }{|x| }+\frac{v''(|x|)}{v'(|x|)}(p(x)-1)+\frac{n-1}{|x|}\Big].&
\end{eqnarray*}}\end{proof}

\begin{proof}[Proof of Theorem~\ref{theoradx}]
The result follows from Theorem~\ref{theoplapx} when instead of the PDI  we take PDE $-\Delta_{p(x)} u=\Phi$. We  apply Fact~\ref{lemradx} and  realize that
\[
-\Delta_{p(x)} u \cdot u+\sigma(x)|\nabla u|^{p(x)}=\]
{\small \[=-|v'(|x|)|^{p(x)-2}v'(|x|)v(|x|)\Big[ \is \frac{\log|v'(|x|)| }{|x| }+\frac{v''(|x|)}{v'(|x|)}(p(x)-1)+\frac{n-1}{|x|}\Big]+\]
\[+\s (x)|v'(|x|)|^{p(x)}=\]}
{\small \[=|v'(|x|)|^{p(x)}\left\{ \s(x)- \frac{v(|x|)}{v'(|x|)}\Big[\is \frac{\log |v'(|x|)| }{|x| }+\frac{v''(|x|)}{v'(|x|)}(p(x)-1)+\frac{n-1}{|x|}\Big]\right\}=\]}
{\small \[ =|v'(|x|)|^{p(x)} \cdot K(x) \geqslant 0.\]}\end{proof}

\begin{proof} [Proof of Corollary~\ref{generalcase}]
 We apply Theorem \ref{theoplapx} with the function $u: \mathbb{R}^n \rightarrow \mathbb{R}$ such that $u(x_1, \ldots, x_n)=e^{ J(x)}$. Then
\begin{eqnarray*}
\nabla_j u&=&j e^{ J(x)},\\
|\nabla u|^{p(x)-2} &=&S^{\frac{p(x)-2}{2}} e^{(p(x)-2) J(x)}, \\
|\nabla u|^{p(x)-2} \nabla_j u&=&j S^{\frac{p(x)-2}{2}}  e^{(p(x)-1) J(x)}, \\
\frac{\partial}{\partial x_j}\Big( |\nabla u|^{p(x)-2} \nabla_{j} u\Big) &=&j S^{\frac{p(x)}{2}}  e^{(p(x)-1) J(x)} \Big[ \frac{J(x)+\frac{\log S}{2}}{S} \frac{\partial p}{\partial x_j} +\frac{j}{S}(p(x)-1)\Big],\\
\sum\limits_{j=1}^n \frac{\partial}{\partial x_j}\Big( |\nabla_{j} u|^{p(x)-2} \nabla_{j} u\Big) &=&S^{\frac{p(x)}{2}}  e^{(p(x)-1) J(x)} \cdot\\
&&\cdot \left[ \frac{J(x)+\frac{\log S}{2}}{S} \sum\limits_{j=1}^n j \frac{\partial p}{\partial x_j}+p(x)-1\right].
\end{eqnarray*}

Thus, when we take $\sigma(x)=p(x)+\beta-1$ , we have
\begin{eqnarray*}
&-\Delta_{p(x)} u \cdot u+\s (x) |\nabla u|^{p(x)}&=\\
=& S^{\frac{p(x)}{2}}e^{p(x) J(x)}
\Big[ \sigma(x)+1- p(x)-\frac{J(x)+\frac{\log S}{2}}{S} \sum\limits_{j=1}^n j \frac{\partial p}{\partial x_j}\Big]&=\\
=& S^{\frac{p(x)}{2}}e^{p(x) J(x)}
\Big[\beta-\frac{J(x)+\frac{\log S}{2}}{S} \sum\limits_{j=1}^n j \frac{\partial p}{\partial x_j}\Big],&
\end{eqnarray*}
which is assumed to be nonnegative.\end{proof}

\end{document}